\documentclass[12pt]{amsart}
% *************** Za�adowanie pakiet�w ***************
\usepackage{graphicx}
\usepackage{amssymb}
\usepackage{enumerate}
\usepackage{amstext}
\usepackage[colorlinks]{hyperref}
\usepackage[cp1250]{inputenc}

\newcommand{\e}{\varepsilon}
\renewcommand{\rho}{\varrho}
\newcommand{\ph}{\varphi}
\newcommand{\N}{\mathbb N}
\newcommand{\R}{\mathbb R}
\newcommand{\K}{\mathcal{K}}
\newcommand{\F}{\mathcal{F}}
\newcommand{\G}{\mathcal{G}}
\newcommand{\X}{X}
\newcommand{\Lip}{\mathrm{Lip}}
\newcommand{\dist}{\operatorname{dist}}
\newcommand{\diam}{\operatorname{diam}}
\newcommand{\h}{\operatorname{ht}}
\newcommand{\rk}{\operatorname{rk}}
\newcommand{\LIM}{\operatorname{LIM}}
\newtheorem {theorem}{Theorem}
\newtheorem {lemma}{Lemma}
\newtheorem {claim}{Claim}
\newtheorem {corollary}{Corollary}
\theoremstyle{definition}
\newtheorem {definition}{Definition}

\begin{document}

\title{Topological classification of scattered IFS-attractors}
\author{Magdalena Nowak}
\address{Faculty of Mathematics and Computer Science\\ Jagiellonian University\\
ul.\L o\-jasiewicza 6, 30-348 Krak\'ow, Poland \\{\rm and}\\Institute of Mathematics\\Jan Kochanowski University\\ul. \'Swi\c{e}tokrzyska 15, 25-406 Kielce, Poland}
\email{magdalena.nowak805@gmail.com}

\date{\today}
%\thanks{The author was supported by the ESF Human Capital Operational Programme grant 6/1/8.2.1./POKL/ 2009.}

\begin{abstract}
We study countable compact spaces as potential attractors of iterated function systems.
We give an example of a convergent sequence in the real line which is not an IFS-attractor and for each countable ordinal $\delta$ we show that a countable compact space of height $\delta+1$ can be embedded in the real line so that it becomes the attractor of an IFS.
On the other hand, we show that a scattered compact metric space of limit height is never an IFS-attractor.
\end{abstract}

\subjclass[2010]{Primary: 28A80; Secondary: 54E40, 54C50, 54G12.}
\keywords{Iterated function system, attractor, contraction, scattered space.}

\maketitle

\tableofcontents

\section{Introduction}

A compact metric space $X$ is called an {\em IFS-attractor} if
$$X=\bigcup_{i=1}^nf_i(X)$$ for~some contractions $f_1,\dots,f_n:X\to X$.
The family $\{f_1,\dots,f_n\}$ is~called an {\em iterated function system} (briefly, an IFS), see \cite{B}.
When $X$ is a subset of some Euclidean space, usually it is assumed that the contractions are defined on the entire space.
In 1981 Hutchinson showed that for the metric space $\R^n$, every IFS consisting of contractions has a unique attractor. In fact this theorem holds also for weak contractions \cite{Hata}.
Let us say that $X$ is a \emph{topological IFS-attractor} if it is homeomorphic to the attractor of some iterated function system or, in other words, there exists a compatible metric on $X$ such that $X$ becomes an IFS-attractor.
Given a metrizable compact space, it is natural to ask when it is a topological IFS-attractor.
A criterion for connected spaces has already been noted by Hata~\cite{Hata}: A connected IFS-attractor must be locally connected.
Recently, Banakh and the author \cite{BanakhNowak} gave an example of a connected and locally connected compact subset of the plane that is not a topological IFS-attractor.

%It is also natural to ask whether some infinite compact metrizable space is an IFS-attractor with respect to any compatible metric.
On the other hand, a result of Kwieci\'nski~\cite{kwiecinski}, later generalized by Sanders \cite{Sanders}, shows the existence of a curve in the plane that is not an IFS-attractor.
In other words, the unit interval (which is obviously an IFS-attractor) has a compatible metric (taken from the plane) such that it fails being an IFS-attractor.
In this direction, Kulczycki and the author~\cite{KulczyckiNowak} gave a general condition on a connected compact space which implies that it has a compatible metric making it a non-IFS-attractor.
Finally, \cite{Rams} showed that the Cantor set has a metric such that it fails to be the attractor of even a countable system of contractions.

Motivated by these results, we study topological properties of scattered IFS-attractors.
It is easy to see that each finite set is an IFS-attractor in every metric space. We present an example of a convergent sequence of real numbers (a~countable compact set in $\R$), which is not an IFS-attractor.
We further investigate more complicated scattered compact spaces and classify them with respect to the property of being a topological IFS-attractor.
Namely, we show that every countable compact metric space of successor Cantor-Bendixson height with a single point of the maximal rank can be embedded topologically in the real line so that it becomes the attractor of an IFS consisting of two contractions whose Lipschitz constants are as small as we wish.
On the other hand, we show that if a countable compact metric space is a topological IFS-attractor, then its Cantor-Bendixson height cannot be a limit ordinal.

Combining our results, we get an example of a countable compact metric space $\K$ (namely, a space of height $\omega+1$) which is an IFS-attractor, however some clopen subset of $\K$ is not an IFS-attractor, even after changing its metric to an equivalent one.

\section{Preliminaries}

Throughout the paper we will use the following standard notation: $d$ will stand for a metric or for the usual distance on the real line; the distance between sets $A,B\subset\R$ will be denoted by $\dist(A,B)= \inf\{d(a,b) \colon a\in{A},~~b\in{B}\}$ and $\diam{A}$ will denote the diameter of~the~set $A\subset\R$: $\diam{A}=\sup_{x,y\in{A}}\{d(x,y)\}$. Let $|A|$ will stand for the number of~elements in the set $A$ and $A+x$ be the set $\{a+x \colon a\in A \}$. Finally $B(x,r)$ will denote the~open ball of radius $r > 0$ centered at~the~point~$x$. 
Given a metric space $(X,d)$, a map $f\colon X \rightarrow X$ is called a {\em contraction} if there exists a constant $\alpha\in(0, 1)$ such that for each $x,y\in X$
$$d(f(x),f(y))\leq\alpha\cdot d(x, y).$$
A map $f\colon X \rightarrow X$ is called a {\em weak contraction} if for each $x,y\in X$, $x\neq y$
$$d(f(x),f(y))< d(x, y).$$
It is well known that every weak contraction on a compact metric space has a unique fixed point.

We recall some basic notions related to scattered spaces.
A topological space X is called {\em scattered} iff every non-empty subspace Y has an
isolated point in Y. It~is well known that a~compact metric space is scattered iff it is countable. Moreover every compact scattered space is zero-dimensional (has a base consisting of clopen sets). 

For a scattered space $X$ let
$$X'=\{x\in X \colon x\text{ is an accumulation point of } X\}$$
be the Cantor-Bendixson derivative of $X$. Inductively define:
\begin{itemize}
\item $X^{(\alpha+1)}=(X^{(\alpha)})'$
\item $X^{(\alpha)}=\bigcap_{\beta<\alpha}X^{(\beta)}$ for a limit ordinal $\alpha$.
\end{itemize}
In general, the set $X^{(\alpha)}\setminus X^{(\alpha+1)}$ is called the $\alpha$th \emph{Cantor-Bendixson level} of $X$.
For an element $x$ of a scattered space $X$, its \emph{Cantor-Bendixson rank} $\rk(x)$ is the unique ordinal $\alpha$ such that $x \in X^{(\alpha)} \setminus X^{(\alpha+1)}$.
The \emph{height} of a scattered space $X$ is $$\h(X) = \min\{\alpha \colon X^{(\alpha)} \text{ is discrete}\}.$$

These are topological invariants of scattered spaces and their elements. By the~definitions and~transfinite induction it~is easy to~prove that for every compact scattered spaces $U$ and $V$
\begin{itemize}
\item if $U\subset V$ then $\h(U)\leq \h(V)$ 
\item $\h(U\cup V) = \max(\h(U),\h(V))$
\item $\h(f(U))\leq \h(U)$ for every continuous function $f$
\item $\h(U)\geq \rk(x)$ for every open neighborhood $U$ of $x$
\end{itemize}   
 The classical Mazurkiewicz-Sierpi\'nski theorem \cite{MS} claims that every countable compact scattered space $X$ is homeomorphic to the space $\omega^{\beta}\cdot n + 1$ with the order topology, where $\beta=\h(X)$ and $n=|X^{(\beta)}|$ is finite. We~shall consider scattered compact spaces of that form.

We finally note two simple properties of disjoint unions of IFS-attractors, which will be needed later.

\begin{lemma}\label{Lsumadwa}
Suppose $X = \bigcup_{i < n} X_i$ is a metric space, where each $X_i$ is compact and isometric to $X_0$ and $\dist(X_i, X_j) > \diam(X_0)$ for every $i<j<n$.
If $X$ is an IFS-attractor (consisting of weak contractions) then so is $X_0$.
\end{lemma}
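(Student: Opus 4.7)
The plan is to iterate the given IFS until every word-image is trapped in a single piece $X_i$, and then transport the resulting decomposition back to $X_0$ via the isometries $X_0\cong X_i$. Write $X=\bigcup_{k=1}^m f_k(X)$ with weak contractions $f_1,\ldots,f_m\colon X\to X$; for a finite word $w=k_1\ldots k_N$ over $\{1,\ldots,m\}$ put $f_w=f_{k_1}\circ\cdots\circ f_{k_N}$, still a weak contraction on the compact space $X$, and iterating the IFS equation yields $X=\bigcup_{|w|=N}f_w(X)$ for every $N\ge1$. The central step is to prove
\[
\max_{|w|=N}\diam(f_w(X))\;\longrightarrow\;0\quad\text{as }N\to\infty.
\]
For contractions with common ratio $\alpha<1$ this is immediate ($\le\alpha^N\diam(X)$); in the weak-contraction case it follows from the methods of Hata~\cite{Hata}, or directly: if the maximum stays $\ge\e$ for some $\e>0$, a finite-branching-tree argument produces an infinite address $(k_j)$ with $\diam(f_{k_1\ldots k_N}(X))\ge\e$ for all $N$, and the nested compacta $A_i=\bigcap_{N\ge i}f_{k_i\ldots k_N}(X)$ satisfy $A_i=f_{k_i}(A_{i+1})$ with $\diam(A_i)\ge\e$; a Hausdorff-metric limit along a subsequence on which $k_{i_n}$ is constant then yields compact sets $B,B'\subseteq X$ of equal positive diameter with $B=f_j(B')$, contradicting the defining property of a weak contraction.

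Once the uniform shrinking is established, fix $N$ so large that $\max_{|w|=N}\diam(f_w(X))<\min_{i\ne j}\dist(X_i,X_j)$. The hypothesis $\dist(X_i,X_j)>\diam(X_0)$ makes the pieces $X_i$ pairwise disjoint (in the trivial case $\diam(X_0)=0$ the lemma is immediate), so each $f_w(X)$ with $|w|=N$ lies in a unique piece $X_{i(w)}$, and consequently
\[
X_0=\bigcup_{|w|=N}\bigl(X_0\cap f_w(X)\bigr)=\bigcup_{w\,:\,i(w)=0}f_w(X).
\]

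Finally, let $\ph_i\colon X_0\to X_i$ be the given isometry for $i=0,\ldots,n-1$, and for each word $w$ of length $N$ with $i(w)=0$ and each index $i<n$ set $g_{w,i}=f_w\circ\ph_i\colon X_0\to X_0$. As a composition of an isometry and a weak contraction, each $g_{w,i}$ is itself a weak contraction, and the identity $f_w(X)=\bigcup_{i<n}f_w(X_i)=\bigcup_{i<n}g_{w,i}(X_0)$ turns the display above into
\[
X_0=\bigcup_{w\,:\,i(w)=0}\;\bigcup_{i<n}g_{w,i}(X_0),
\]
exhibiting $X_0$ as the attractor of the finite weak-contraction IFS $\{g_{w,i}\}$. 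The main obstacle is clearly the uniform shrinking of word-images; everything after it is bookkeeping.
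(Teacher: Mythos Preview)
Your argument is correct (the ``equal positive diameter'' step tacitly uses that $\diam(A_i)$ is strictly increasing and bounded, hence convergent, so any two subsequential Hausdorff limits have the same diameter), but it is considerably more elaborate than the paper's. The paper observes that the specific hypothesis $\dist(X_i,X_j)>\diam(X_0)$ already forces, for each \emph{single} map $f$ in the original IFS and each piece $X_j$, that $f(X_j)$ lies in one piece: since $f$ is a weak contraction, $\diam(f(X_j))<\diam(X_j)=\diam(X_0)<\dist(X_\ell,X_{\ell'})$, so $f(X_j)$ cannot meet two pieces. Hence no iteration and no uniform-shrinking lemma are needed; setting $S=\{(i,j):f_i(X_j)\subset X_0\}$ and taking isometries $h_j\colon X_0\to X_j$, one gets $X_0=\bigcup_{(i,j)\in S}f_i(X_j)$ and the IFS $\{f_i\circ h_j:(i,j)\in S\}$ on $X_0$ in one line. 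Your route has the virtue of proving a genuinely stronger statement---the conclusion holds whenever the $X_i$ are merely a positive distance apart, without the bound $\dist>\diam$---at the cost of the nontrivial compactness argument for the shrinking of word-images under weak contractions.
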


\begin{proof}
Let $\{f_i\}_{i=1}^{k}$ be an IFS such that $X = \bigcup_{i=1}^k f_i(X)$.
Note that if $f$ is a weak contraction and $f(X_i) \cap X_0 \ne \emptyset$ then $f(X_i) \subset X_0$, because $\dist(X_0, X_i) > \diam(X_0) = \diam(X_i)$.
For each $i$ let $h_i$ be an isometry from $X_0$ onto $X_i$.
Denote by $S$ the set of all pairs $(i,j)$ such that $f_i(X_j) \subset X_0$.
By the remark above, $X_0 = \bigcup_{(i,j) \in S}f_i(X_j)$.
Thus, $X_0$ is the attractor of an IFS consisting of (weak) contractions of the form $f_i \circ h_j$ where $(i,j) \in S$.
\end{proof}

\begin{lemma}\label{LemYUnion}
Assume $X = A \cup B$ is a compact metric space, where $A,B$ are clopen and disjoint IFS-attractors.
Then $X$ is an IFS-attractor.
\end{lemma}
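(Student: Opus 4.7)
The plan is to build an IFS on $X$ by extending each given contraction on $A$ (resp.\ on $B$) to the whole of $X$, sending the ``other'' component to a single point of the image. A naive constant extension need not be a contraction, so I would first iterate the original IFS's to drive their Lipschitz constants below a geometric threshold: set $\delta := \dist(A,B)$, which is positive because $A$ and $B$ are disjoint compact sets, and $D := \max(\diam A, \diam B)$.

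Let $A = \bigcup_{i=1}^n f_i(A)$ with $\Lip(f_i) \le \alpha < 1$ and $B = \bigcup_{j=1}^m g_j(B)$ with $\Lip(g_j) \le \alpha$ (using a common bound). By induction on $k$, $A = \bigcup_{|I|=k} f_I(A)$ for $f_I = f_{i_1} \circ \cdots \circ f_{i_k}$ with $\Lip(f_I) \le \alpha^k$, and analogously on $B$. Choosing $k$ so large that $\alpha^k D < \delta$, I pass to the iterated IFS's $\{\hat f_i\}$ on $A$ and $\{\hat g_j\}$ on $B$, all with Lipschitz constants at most some $\beta$ satisfying $\beta D < \delta$.

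Next, fixing $a_0 \in A$ and $b_0 \in B$, define $\tilde f_i,\tilde g_j : X \to X$ by
\[
\tilde f_i(x) = \begin{cases} \hat f_i(x), & x \in A, \\ \hat f_i(a_0), & x \in B, \end{cases} \qquad \tilde g_j(x) = \begin{cases} \hat g_j(x), & x \in B, \\ \hat g_j(b_0), & x \in A. \end{cases}
\]
Since $\hat f_i(a_0) \in \hat f_i(A)$, one has $\tilde f_i(X) = \hat f_i(A)$, and similarly $\tilde g_j(X) = \hat g_j(B)$, so the coverage condition
\[
\bigcup_i \tilde f_i(X) \cup \bigcup_j \tilde g_j(X) = A \cup B = X
\]
is automatic.

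The main verification is that each $\tilde f_i$ is a contraction on $(X,d)$. For $x,y$ in the same component the bound is immediate (ratio at most $\beta$ on $A$, and zero on $B$); for $x \in A$, $y \in B$,
\[
d(\tilde f_i(x), \tilde f_i(y)) = d(\hat f_i(x), \hat f_i(a_0)) \le \beta \, d(x, a_0) \le \beta D < \delta \le d(x,y),
\]
so the Lipschitz ratio is at most $\beta D/\delta < 1$. The maps $\tilde g_j$ are handled symmetrically, and $X$ is exhibited as an IFS-attractor. The only genuine obstacle is this last inequality, and the iteration step is designed precisely to secure $\beta D < \delta$.
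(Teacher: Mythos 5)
Your proposal is correct and follows essentially the same route as the paper: iterate the two given IFS's until their Lipschitz constants are small enough relative to $\dist(A,B)$ and the diameters, then extend each map to the other component by a constant lying in its image, checking that the mixed-pair Lipschitz ratio stays below $1$. The only cosmetic difference is the normalization (the paper sets $\diam(X)=1$ and uses the threshold $\tfrac12\dist(A,B)$, while you use $\beta\,\max(\diam A,\diam B)<\dist(A,B)$); both yield the same argument.
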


\begin{proof}
Given an IFS $\F$, given $k \in \N$, denote by $\F^k$ the collection of all compositions $f_1 \circ f_2 \circ \dots \circ f_k$, where $f_1, \dots, f_k \in \F$ (possibly with repetitions).
Then $\F^k$ is another IFS with the same attractor.
Moreover, if $r = \max_{f \in \F} \Lip(f)<1$ then $r^k \geq \max_{g \in \F^k} \Lip(g)$.

We may assume that both sets $A$, $B$ are nonempty and that $1 = \diam(X)$.
Let $\e = \dist(A,B)$.
In view of the remark above, we may find two iterated function systems $\F$ and $\G$ on $A$ and $B$ respectively, such that $A$ and $B$ are their attractors, and the maximum of all Lipschitz constants of the contractions in $\F$ and $\G$ is $< \frac 12 \e$.
In particular, $\diam(h(A)) < \frac 12 \e$ whenever $h \in \F$ and $\diam(h(B)) < \frac 12 \e$ whenever $h \in \G$.

Extend each $f \in \F$ to a map $f' \colon X \to X$ by letting $f'(B) = \{ p_f \}$, where $p_f$ is any fixed element of $f(A)$.
Observe that the Lipschitz constant of $f'$ is $\leq \frac 12$, because given $x \in A$, $y \in B$, we have
$$d(f'(x), f'(y)) \leq \diam(f(A)) < \frac 12 \e = \frac 12 \dist(A,B) \leq \frac 12 d(x,y).$$
Similarly, extend each $g \in \G$ to a map $g'$ so that $g'(A) = \{ p_g \}$, where $p_g \in g(B)$.
Again, $g'$ has Lipschitz constant $\leq \frac 12$.

Finally, $\{f'\}_{f\in \F} \cup \{g'\}_{g\in \G}$ is an IFS whose attractor is $X$.
\end{proof}

It is a natural question whether the converse to Lemma~\ref{LemYUnion} holds.
As we shall see later, this is not the case.

\section{Convergent sequences}

In this section we construct a convergent sequence in the real line, which is not an IFS-attractor.

When we consider a sequence $\{x_n\}_{n\in\N}$ as a possible attractor of an iterated function system in $\R$, we identify that sequence with the closure $\overline{\{x_n \colon n\in\N\}}$. We say that a sequence $\{x_n\}_{n\in\N}$ is an IFS-attractor if so is $\overline{\{x_n \colon n\in\N\}}$.
\newline

Every geometric convergent sequence is~an~IFS-attractor. For example the set $\{0\}\cup\{\frac{1}{2^n}\}_{n\in\N}$ is an attractor of~IFS $\{f_1(x)=\frac{x}{2} , f_2(x)=1\}$. We will give an~example of a convergent sequence which is not the attractor of any IFS in $\R$.

\begin{theorem}\label{Tw}
There exists a convergent sequence $\K\subset\R$ which is not the attractor of any iterated function system in $\R$ consisting of weak contractions.
\end{theorem}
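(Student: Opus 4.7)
We construct the sequence $\K=\{0\}\cup\{a_n:n\in\N\}$ with $a_n\downarrow 0$ by recursion with carefully chosen gaps, and argue by contradiction. Suppose $\K=\bigcup_{i=1}^N f_i(\K)$ for weak contractions $f_i\colon\R\to\R$. Each $f_i(\K)$ is a compact subset of~$\K$; if infinite, its unique accumulation point (which by continuity equals $f_i(0)$) must lie in~$\K$, and since $0$ is the only non-isolated point of $\K$, we obtain $f_i(0)=0$. Call such $f_i$ \emph{limit-fixing}; being a weak contraction with $0$ as its unique fixed point, it satisfies $|f_i(a_n)|<a_n$, and hence (using $f_i(a_n)\in\K\subset[0,\infty)$) $f_i(a_n)\in\{0\}\cup\{a_k:k>n\}$. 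The remaining maps have finite image on $\K$, so cofinitely many $a_n$ must lie in the union of the limit-fixing images.

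For each limit-fixing $f_i$, define the partial shift $\sigma_i\colon\N\to\N$ by $\sigma_i(n)=k$ when $f_i(a_n)=a_k$ (undefined when $f_i(a_n)=0$). Then $\sigma_i(n)>n$ wherever defined, and the weak contraction inequality between $a_n$ and $a_m$ yields
\[
|a_{\sigma_i(n)}-a_{\sigma_i(m)}|<a_n-a_m \qquad (n<m),
\]
with the convention $a_\infty=0$ for undefined shifts. The plan is to choose the $a_n$ so that no finite collection of such partial shifts (compatible with the above inequalities) can cofinitely cover $\N$.

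We proceed recursively: having fixed $a_1>\dots>a_n$, the set of partial functions $\sigma\colon\{1,\dots,n\}\to\{2,\dots,n+1\}\cup\{\infty\}$ satisfying the weak contraction inequalities on this finite sample is finite. We then choose $a_{n+1}$ so that the new gap $g_n=a_n-a_{n+1}$, together with the previously fixed gaps, introduces an obstruction preventing every candidate $\sigma$ from being consistently extended to cover cofinitely many $a_k$, while preserving every obstruction created at earlier stages. Iterating produces infinitely many indices which no limit-fixing map can ever hit, contradicting that only finitely many $a_n$ can be covered by non-limit-fixing maps.

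The main obstacle is the flexibility of weak contractions on $\R$: they need not be monotone or injective on $\K$, a single limit-fixing map may collapse many $a_k$ to $0$, and several such maps may cooperate. Hence the ``admissible'' partial shifts at each stage form a rich combinatorial object, and the inductive bookkeeping must leave enough slack so that no future coordination of finitely many maps (limit-fixing or not) can undo the obstructions already built. Verifying that this simultaneous preservation of old obstructions and introduction of a new one is always possible is the technical heart of the proof.
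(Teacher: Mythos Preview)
Your reduction is correct and matches the paper: any weak contraction $f$ with $f(\K)$ infinite must fix $0$, and then $|f(a_n)|<a_n$ forces $f(a_n)\in\{0\}\cup\{a_k:k>n\}$, while the remaining maps have finite image. The problem is that what you call ``the technical heart of the proof'' is never carried out, and as stated it is not clear it can be. At stage $n$ you propose to choose $a_{n+1}$ so as to block ``every candidate $\sigma$'' from extending to a cofinal cover; but a single shift $\sigma$ with $\sigma(n)>n$ never covers cofinitely many indices anyway. What must actually be blocked is every \emph{finite family} of such shifts, of arbitrary unknown size $N$, acting in concert, and you give no mechanism by which the choice of one gap $g_n$ defeats all $N$-tuples for all $N$ simultaneously while preserving earlier obstructions. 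The phrase ``infinitely many indices which no limit-fixing map can ever hit'' is also either too strong (for any fixed $a_k$ one can cook up a limit-fixing weak contraction hitting it) or, if read relative to a fixed IFS, presupposes exactly the conclusion you are trying to establish. So the proposal is a plan whose decisive step is missing.

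The paper sidesteps the diagonalisation entirely by organising the sequence into \emph{blocks} $F_1,F_2,\dots$ accumulating at $0$, with $|F_n|=k_n$ where $k_n=n(k_1+\dots+k_{n-1})$, and with the geometric spacing $\diam F_n<\dist(F_n,F_{n+1})$. Your dichotomy then upgrades to: a limit-fixing $f$ sends each $F_n$ entirely into $\bigcup_{j>n}F_j$. Hence if $\K$ were the attractor of an IFS with $m$ limit-fixing maps and finitely many finite-image maps, then for all large $n$ one gets $F_n\subset\bigcup_{i\le m}f_i(F_1\cup\dots\cup F_{n-1})$, so $k_n\le m(k_1+\dots+k_{n-1})$, contradicting the definition of $k_n$ once $n>m$. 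This single counting inequality replaces your entire recursive bookkeeping: the point you are missing is that grouping points into blocks of rapidly growing cardinality converts ``no finite family of shifts can cofinally cover'' into an immediate pigeonhole.
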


The construction of $\K$ is inspired by the example of a locally connected continuum which is not the attractor of any IFS on $\R^2$, constructed by Kwieci{\'n}ski~\cite{kwiecinski}.

We construct the sequence $\K$ as follows. The main building block is the set $F(a,k)$, for $a>0$ and $k\in\N$, defined by 
$$F(a,k) = \Big\{\frac{ia}k \colon i=0,...,k-1\Big\}~.$$
Note that for every distinct $x,y\in{F(a,k)}$ we have that $d(x,y)\geq\frac{a}{k}> 0$, therefore if $d(x,y) < \frac{a}{k}$, then $x=y$.
\newline

Now, let $a_n = \frac{1}{3\cdot 2^{n-1}}$ and $k_n= n(k_{n-1}+\dots+k_1)$ where $k_1=1$. Then
$$F_n = F(a_n,k_n) + \frac{1}{2^{n-1}}~.$$

\begin{figure}[h]
	\includegraphics[scale =0.8]{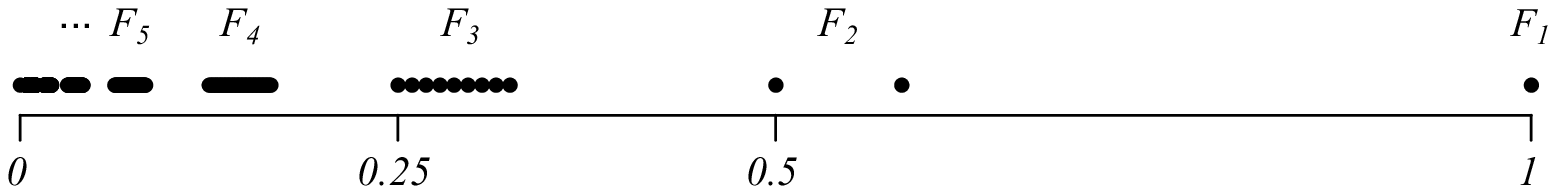}
	\caption{The sequence $\K$}
\end{figure}

The set $\K$ is defined to be the union $$\K = \{0\}\cup\bigcup_{n=1}^\infty F_n~.$$

It is clear that $\K$ consists of a decreasing sequence and its limit point.
Note that:
\begin{enumerate}
\item\label{1} the sequence \{$\frac{a_n}{k_n}\}_{n\in\N^+}$ is decreasing
\item\label{2} the sequence $\{\dist(F_n,F_{n+1})\}_{n\in\N^+}$ is decreasing
\item\label{3} for all $n\in\N^+$ we have $$\diam{F_n}\leq{a_n}< \dist(F_n,F_{n+1}).$$
%In particular properties (\ref{2}) and (\ref{3}) imply that 
%\item\label{4} for $x\in{F_n}$ we have $\K\cap B(x,\dist(F_n,F_{n+1}))=F_n$.
\end{enumerate}

The idea behind the construction of $\K$ is that weak contractions on that set behave in a specific way. In particular we have the following

\begin{lemma}\label{kontrakcje}
For a weak contraction $f\colon\K\rightarrow\K$ either $$f(F_n)\subset\K\setminus(F_1\cup\dots\cup F_n) \text{ for all }n\in\N^+$$
or else the set $f(\K)$ is finite.
\end{lemma}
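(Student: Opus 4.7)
The plan is to split on the value of $f(0)$. If $f(0)\neq 0$, I will show directly that $f(\K)$ is finite; if $f(0)=0$, I will show by contradiction that $f(F_n)\subset\K\setminus(F_1\cup\dots\cup F_n)$ for every $n$.

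In the case $f(0)\neq 0$, the point $f(0)$ lies in some $F_k$ and is therefore isolated in $\K$. Continuity of $f$ then gives a neighborhood $U$ of $0$ in $\K$ on which $f$ is constantly equal to $f(0)$; since $U$ is cofinite in $\K$, $f(\K)$ is finite, and the second alternative holds.

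In the case $f(0)=0$, the weak-contraction property, applied with $y=0$, yields $d(f(x),0)<d(x,0)$ for every $x\neq 0$, which (since $\K\subset[0,\infty)$) rewrites as $f(x)<x$. Suppose toward contradiction that $f(F_{n_0})\cap(F_1\cup\dots\cup F_{n_0})\neq\emptyset$ for some $n_0$. A direct computation using $a_n=1/(3\cdot 2^{n-1})$ shows that every $F_m$ with $m<n_0$ sits strictly above $\max F_{n_0}$, so the image cannot land in any such $F_m$ (this would violate $f(x)<x$); hence $f(F_{n_0})\cap F_{n_0}\neq\emptyset$. Pick $x_0\in F_{n_0}$ with $p:=f(x_0)\in F_{n_0}$. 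For any $y\in F_{n_0}$, the strict contraction gives $d(f(y),p)<\diam F_{n_0}\leq a_{n_0}$, while properties~(2)--(3) of the construction, together with the estimate $d(p,0)\geq 1/2^{n_0-1}>a_{n_0}$, show that every point of $\K\setminus F_{n_0}$ lies at distance strictly greater than $a_{n_0}$ from $p$. Hence $f(F_{n_0})\subset F_{n_0}$. Finally, consecutive points of $F_{n_0}$ are at distance $a_{n_0}/k_{n_0}$, and strict contraction shrinks the distance between their images strictly below $a_{n_0}/k_{n_0}$; but the only distance realised in $F_{n_0}$ below $a_{n_0}/k_{n_0}$ is $0$, so images of consecutive points coincide. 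Iterating, $f|_{F_{n_0}}$ is constant with value some $q\in F_{n_0}$, whence $f(q)=q$, contradicting $f(q)<q$.

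The main obstacle is the pair of confinement steps in the second case: first using the inter-cluster separations (properties~(2)--(3)) to force $f(F_{n_0})\subset F_{n_0}$, then using the intra-cluster minimal spacing $a_{n_0}/k_{n_0}$ (property~(1)) together with the strict inequality in the definition of weak contraction to collapse $f|_{F_{n_0}}$ to a single point. Both appeals are exactly what the specific constants $a_n,k_n$ in the construction of $\K$ were designed to enable; the rest of the argument is packaging.
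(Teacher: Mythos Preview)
Your argument is correct. The case $f(0)\neq 0$ is identical to the paper's. In the case $f(0)=0$, however, you take a genuinely different route. The paper argues \emph{directly}: for each $n$ it picks the leftmost point $x\in F_n$ (so $d(0,x)=d(0,F_n)$), observes that $d(0,f(x))<d(0,F_n)$ forces $f(x)$ into the tail $\{0\}\cup\bigcup_{i>n}F_i$, and then uses only property~(\ref{3}), namely $\diam(f(F_n))<\diam F_n<\dist(F_n,F_{n+1})=\dist(F_n,\bigcup_{i>n}F_i)$, to conclude that the entire image $f(F_n)$ stays in the tail. That is the whole proof---two lines, no contradiction, and no appeal to the intra-block spacing $a_n/k_n$.

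Your route is by contradiction and is more elaborate: you first confine $f(F_{n_0})$ inside $F_{n_0}$, then exploit the minimal spacing $a_{n_0}/k_{n_0}$ to collapse $f\restriction F_{n_0}$ to a constant and extract a forbidden fixed point. This works, but it uses strictly more of the construction (the intra-block structure of $F_{n_0}$, which the paper's argument never touches) and is longer. What your approach buys is perhaps a clearer picture of \emph{why} $f(F_n)$ cannot linger in $F_n$ itself---the fixed-point obstruction is a nice mechanism---whereas the paper's anchoring trick dispatches $F_n$ and all earlier blocks in one stroke via the single gap estimate~(\ref{3}). (Minor remark: the fact you invoke for the collapse step is the observation that distinct points of $F(a,k)$ are at distance $\geq a/k$, stated just before the lemma; property~(\ref{1}) as numbered in the paper is the monotonicity of $a_n/k_n$, which you do not actually need.)
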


\begin{proof}
Let $f$ be a weak contraction on $\K$ satisfying $f(0)\neq 0$. This means that $f(0)$ is an isolated point of $\K$. The function $f$ is continuous, so there exists an~open neighborhood $U$ of 0, such that $f(U)=\{f(0)\}$. Thus, the~set $f(\K)=f(U)\cup f(\K\setminus U)$ is finite.
\newline

If $f(0)=0$, for each $n\in\N^+$ there exists $x\in F_n$ such that $d(0,x)=d(0, F_n)$. Then $d(0,f(x))<d(0,x)=d(0, F_n)$ which implies $f(x)\in\K\setminus(F_n\cup\dots\cup F_1)$ and by (\ref{3}) we have $$\diam(f(F_n))<\diam(F_n)<\dist(F_n,F_{n+1})=\dist(F_n,\bigcup_{i=n+1}^\infty F_i).$$
This implies that $f(F_n)\cap(F_1\cup\dots\cup F_n)=\emptyset$.
\end{proof}

\begin{proof}[Proof of Theorem ~\ref{Tw}]
Suppose that $\K$ is the attractor of an iterated function system  $\F=~\{f_1,f_2,\dots,f_r\}$ consisting of weak contractions in $\R$. That is, $\K=~\bigcup_{i=1}^r{f_i(\K)}$. By Lemma~\ref{kontrakcje}, we know that there are two kinds of weak contractions $f$ on $\K$:
\begin{enumerate}[(i)]
\item\label{1rodz} $f(\K)$ is finite
\item\label{2rodz} for all $n\in\N^+$ it holds that $f(F_n)\subset \K\setminus(F_{1}\cup\dots\cup{F_n})$.
\end{enumerate}

Now we can write the set $\K$ as the union $\K=\bigcup_{i=1}^m{f_i(\K)}\cup{S}$ where $m\leq{r}$, the~functions $f_i$ for $i=1,\dots,m$ satisfy (\ref{2rodz}) and the set $S=\bigcup_{i=m+1}^rf_i(\K)$ is finite.
This implies that $$F_n\subset\bigcup_{i=1}^m f_i(F_{n-1}\cup\dots\cup{F_1})\cup S~.$$ Indeed, if $x\in F_n$ then $x=f(y)$ for some $f\in\F$ and $y\in\K$. If $f$ is of type (\ref{1rodz}) then $x\in S$. Otherwise $y\in F_{n-1}\cup\cdots\cup F_1$, because of (\ref{2rodz}).

Since $S$ is finite, for $n$ big~enough we have that $F_n\subset\bigcup_{i=1}^m f_i(F_{n-1}\cup\dots\cup{F_1})$ so
$$k_n=|F_n| \leq |\bigcup_{i=1}^m f_i(F_{n-1}\cup\dots\cup{F_1})| \leq m(k_{n-1}+\dots+k_1)~.$$
But $k_n= n(k_{n-1}+\dots+k_1)$ so for $n>m$ we get a contradiction.   
\end{proof}

In fact, every compact scattered space can be embedded topologically in the real line so that its image is not the attractor of any IFS consisting of weak contractions. We prove the result below, using the same idea as for the convergent sequence.

\begin{theorem}\label{Tw_noIFSattractor}
A compact scattered metric space with successor height can be embedded topologically in the real line so that it is not the attractor of any iterated function system consisting of weak contractions.
\end{theorem}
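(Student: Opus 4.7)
The plan is to generalize the construction and counting of Theorem~\ref{Tw}, with the $\delta$-th Cantor-Bendixson derivative playing the role that the entire set did there. By the Mazurkiewicz-Sierpi\'nski theorem a compact scattered metric space of height $\delta+1$ is homeomorphic to $\omega^{\delta+1}\cdot N+1$, and that space is in turn homeomorphic to a disjoint union of $N$ copies of $\omega^{\delta+1}+1$. By the contrapositive of Lemma~\ref{Lsumadwa} it suffices to exhibit a bad embedding of $\omega^{\delta+1}+1$ and then place $N$ isometric copies of it sufficiently far apart. For $X\cong\omega^{\delta+1}+1$, I would embed it as $\K=\{0\}\cup\bigsqcup_n F_n\subset\R$, where each $F_n$ is a homeomorphic copy of $\omega^{\delta}\cdot k_n+1$ (so $F_n$ has height $\delta$ with $k_n$ maximal-rank points) placed in its own interval $I_n$, under the same Tw-like geometric conditions: the $I_n$ are pairwise disjoint and accumulate only at $0$, $\diam(F_n)<\dist(F_n,F_{n+1})$, and $k_1=1$, $k_n=n(k_1+\ldots+k_{n-1})$. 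The Mazurkiewicz-Sierpi\'nski theorem again yields $\K\cong\omega^{\delta+1}+1$.

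Suppose $\K=\bigcup_{i=1}^{r}f_i(\K)$ with weak contractions $f_i$. Split $\F_a=\{f_i:f_i(0)\neq 0\}$ and $\F_b=\{f_i:f_i(0)=0\}$; the geometric argument of Lemma~\ref{kontrakcje} still shows $f(F_n)\subset\{0\}\cup\bigcup_{m>n}F_m$ for all $f\in\F_b$. The essential new input is a rank-preserving covering lemma: for every $x\in\K^{(\delta)}$ there exist $j$ and $y\in\K^{(\delta)}$ with $f_j(y)=x$. I would prove this by transfinite induction on $\delta$: for a successor $\delta=\alpha+1$, take a sequence $p_k\in\K^{(\alpha)}\setminus\{x\}$ with $p_k\to x$, apply the inductive hypothesis to write $p_k=f_{j_k}(y_k)$ with $y_k\in\K^{(\alpha)}$, pigeonhole on $j_k$ to fix a single $j$, and by compactness of $\K^{(\alpha)}$ extract a convergent subsequence $y_k\to y\in\K^{(\alpha+1)}$ with $f_j(y)=x$; for a limit $\delta$ use the finite intersection property on the nested nonempty closed sets $(\bigcup_j f_j^{-1}(x))\cap\K^{(\beta)}$, $\beta<\delta$. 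Combined with the constraint on $\F_b$, for $x\in F_n^{(\delta)}$ one obtains $x=f_j(y)$ with $y\in\K^{(\delta)}$, where additionally $y\in F_1^{(\delta)}\cup\cdots\cup F_{n-1}^{(\delta)}$ in case $f_j\in\F_b$.

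It follows that $F_n^{(\delta)}\subset\bigcup_{f\in\F_a}f(\K^{(\delta)})\cup\bigcup_{f\in\F_b}f(F_1^{(\delta)}\cup\cdots\cup F_{n-1}^{(\delta)})$; the second union has at most $r(k_1+\cdots+k_{n-1})$ points. For the first union, note that $\K^{(\delta)}=\{0\}\cup\bigsqcup_n F_n^{(\delta)}$ is a convergent sequence (height $1$, unique accumulation point $0$); applying the standard preservation inclusion $g(Y)^{(1)}\subset g(Y^{(1)})$ to $g=f|_{\K^{(\delta)}}$ gives $f(\K^{(\delta)})^{(1)}\subset\{f(0)\}$. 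Thus $f(\K^{(\delta)})$ is a sequence converging to $f(0)\in F_{m(f)}$ together with at most finitely many extra points, and therefore meets only finitely many $F_n$; taking the union over the finite family $\F_a$ still leaves only finitely many exceptional indices. For all other (hence all sufficiently large) $n$ the $\F_a$ contribution is empty, so $k_n=|F_n^{(\delta)}|\leq r(k_1+\cdots+k_{n-1})$, which contradicts $k_n=n(k_1+\cdots+k_{n-1})$ once $n>r$.

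The main obstacle compared to Theorem~\ref{Tw} is that the blocks $F_n$ are no longer finite, so elementary cardinality counting is unavailable and is replaced by counting points of maximal rank $\delta$; a priori a continuous map may ``raise rank'' so that $|f(Y)\cap\K^{(\delta)}|$ exceeds $|Y^{(\delta)}|$. The rank-preserving covering lemma is precisely what certifies that every rank-$\delta$ target admits a rank-$\delta$ preimage under some $f_j$, while the structural description of $f(\K^{(\delta)})$ as a convergent sequence with finitely many outliers controls the $\F_a$ contribution; verifying these two ingredients is where the bulk of the work lies.
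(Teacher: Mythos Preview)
Your proof is correct and follows essentially the same route as the paper's: the same block construction $\{0\}\cup\bigcup_n F_n$ with $|F_n^{(\delta)}|=k_n$, the same dichotomy on weak contractions according to whether $f(0)=0$, the same counting of rank-$\delta$ points against $k_n=n(k_1+\dots+k_{n-1})$, and the same reduction of the general case to $\omega^{\delta+1}+1$ via Lemma~\ref{Lsumadwa}.

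The one place where you diverge is the justification of the counting bound. The paper simply writes
\[
k_n=|F_n^{(\alpha)}|\le\Big|\Big(\bigcup_{i=1}^m f_i(F_{n-1}\cup\dots\cup F_1)\Big)^{(\alpha)}\Big|\le m(k_{n-1}+\dots+k_1),
\]
relying implicitly on the standard fact that a continuous image of a compact scattered space cannot increase the number of top-rank points (a refinement of the preliminary inequality $\h(f(U))\le\h(U)$). You instead prove an explicit \emph{rank-preserving covering lemma}---every $x\in\K^{(\delta)}$ has a preimage in $\K^{(\delta)}$ under some $f_j$---by transfinite induction, and then count preimages directly. Unpacked, the two arguments are equivalent: the paper's second inequality is exactly the statement $(f(A))^{(\alpha)}\subset f(A^{(\alpha)})$, which is what your lemma establishes. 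Your version is more self-contained and makes transparent why the counting works for arbitrary $\delta$; the paper's is terser but leaves that step to the reader.
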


\begin{proof}
First, we use the idea of the proof of Theorem \ref{Tw} for the space homeomorphic to $\omega^\delta+1$, where $\delta=\alpha+1$ is a fixed successor ordinal.

Let us consider such space written as $\X=\{0\}\cup\bigcup_{n=1}^\infty X_n$, where each space $X_n$ is homeomorphic to $\omega^\alpha+1$ and $X_n \cap X_m = \emptyset$ whenever $n \ne m$.
We can topologically embed the space $\X$ into the real line such that:
\begin{itemize}
\item the spaces $X_k$ are gathered in blocks $\{F_n\}_{n=1}^\infty$ which accumulate to~0;
\item each block $F_n$ contains $k_n$ spaces of the form $X_k$;
\item for every $n\geq 1$ it holds that
\begin{equation}
 \diam(F_n)\leq\dist(F_n,F_{n+1}).
\tag{*}\label{gwiazdka}
\end{equation}
\end{itemize}
In other words, we take the convergent sequence constructed in the proof of Theorem~\ref{Tw} and replace each point by a copy of $\omega^\alpha + 1$, taking care that its diameter should be small enough, so that (\ref{gwiazdka}) holds.

As in the proof of Theorem \ref{Tw}, we may show that there are two kinds of weak contractions $f$ on $\X$:
\begin{enumerate}[(i)]
\item\label{1kontr} either $f(\X)$ covers only finitely many sets $X_n$, or
\item\label{2kontr} for all $n\geq 1$ we have that $f(F_n)\subset \X\setminus(F_{n}\cup\dots\cup{F_1})$.
\end{enumerate}
To show this dichotomy we have to use (\ref{gwiazdka}) and the fact that it is impossible to cover the space $\X$ using finitely many spaces of height $<\delta$.

Now we can omit contractions of the first type, as in the proof of Theorem \ref{Tw}, and we observe that if $\X$ is an IFS-attractor, then for $n$ big enough we have that
$$F_n\subset\bigcup_{i=1}^m f_i(F_{n-1}\cup\dots\cup{F_1})$$ 
where $f_i$ for $i=1,\dots,m$ satisfy (\ref{2kontr}). Then 
$$k_n=|F_n^{(\alpha)}| \leq\Big|\Big(\bigcup_{i=1}^m f_i(F_{n-1}\cup\dots\cup{F_1})\Big)^{(\alpha)}\Big| \leq m(k_{n-1}+\dots+k_1).$$
Now, taking $n>m$ we get a contradiction by the definition of $k_n$.   
\newline

We have already shown that every space $\omega^{\delta}+1$ of successor height can be embedded topologically in the real line so that it is not the attractor of any IFS. To show that each $\omega^{\delta}\cdot n +1$ has the same property, we place on the real line $n$ isometric copies $X_1, \dots, X_n$ of the space constructed before (homeomorphic to $\omega^{\delta}+1$) so that
$$\diam(X_k)=\diam(X_{k+1})<\dist(X_k,X_{k+1})$$
for every $k=1,\dots,n-1$.
By Lemma~\ref{Lsumadwa} we conclude that if $X_1$ is not the attractor of any IFS then neither is $X = X_1\cup \dots \cup X_n$.
\end{proof}

We have proved Theorem \ref{Tw_noIFSattractor} only for compact scattered spaces of successor height. It turns out that spaces of limit height are never IFS-attractors, as will be shown in the next section.

\section{Scattered spaces of limit height}

\begin{theorem}\label{TwLimit}
A compact scattered metric space of limit Cantor-Bendix\-son height is not homeomorphic to any IFS-attractor consisting of weak contractions.
In particular, it is not a topological IFS-attractor.
\end{theorem}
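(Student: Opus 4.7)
The plan is to argue by contradiction: suppose $\h(X) = \delta$ is a limit ordinal and yet $X = \bigcup_{i=1}^{n} f_i(X)$ for some weak contractions $f_i \colon X \to X$, and derive the absurd conclusion that $X$ must be a singleton. First I would verify that the top Cantor--Bendixson level $X^{(\delta)}$ is finite and nonempty: it is discrete by the definition of $\h(X)$ and compact, hence finite; and for every $\gamma < \delta$ we have $X^{(\gamma)} \neq \emptyset$ (else $\h(X) \leq \gamma$), so $\{X^{(\gamma)}\}_{\gamma<\delta}$ is a decreasing family of nonempty compacta whose intersection $X^{(\delta)}$ is nonempty.

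Next I would check that each $f_i$ induces a permutation of $X^{(\delta)}$. A weak contraction is injective, and a continuous injection of a compact space corestricts to a homeomorphism onto its (closed) image, so $f_i \colon X \to f_i(X)$ is a homeomorphism onto a closed subspace of $X$. Cantor--Bendixson derivatives are topological, so $f_i(X^{(\delta)}) = (f_i(X))^{(\delta)}$, while derivatives of a closed subspace of $X$ are contained in the corresponding derivatives of the ambient space, giving $(f_i(X))^{(\delta)} \subset X^{(\delta)}$. Finiteness of $X^{(\delta)}$ and injectivity of $f_i$ then upgrade the inclusion to equality.

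The heart of the argument is then to conclude $|X^{(\delta)}| = 1$. Since $f_i$ permutes the finite set $X^{(\delta)}$, some iterate $f_i^k$ restricts to the identity on $X^{(\delta)}$; but $f_i^k$ is itself a weak contraction on the compact space $X$ (the composition of weak contractions is clearly a weak contraction), and hence has at most one fixed point. So $|X^{(\delta)}| \leq 1$, and combined with nonemptiness this gives $X^{(\delta)} = \{p\}$ for a single point $p$, necessarily fixed by every $f_i$.

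Finally, a short diameter argument closes the proof: pick $x^* \in X$ realising $r := \max_{x \in X} d(x, p)$, and use $X = \bigcup_i f_i(X)$ to write $x^* = f_i(y)$. If $y \neq p$, the strict weak-contraction inequality gives $d(x^*, p) = d(f_i(y), f_i(p)) < d(y, p) \leq r$, contradicting $d(x^*,p) = r$. Hence $y = p$, so $x^* = f_i(p) = p$ and $r = 0$, i.e.\ $X = \{p\}$---contradicting $\h(X) = \delta \geq \omega$. The only delicate point I expect to need to verify carefully is the containment $f_i(X^{(\delta)}) \subset X^{(\delta)}$; once this is in place, the uniqueness of the fixed point of a weak contraction on a compact space (recalled in the Preliminaries) drives the rest.
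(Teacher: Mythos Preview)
There is a genuine gap: weak contractions need not be injective. The paper's definition requires only $d(f(x),f(y)) < d(x,y)$ for $x \neq y$, and every constant map satisfies this. Hence your key step fails: $f_i$ need not carry $X^{(\delta)}$ into itself, let alone permute it. Concretely, take $X = \{0\} \cup \{2^{-n} : n \geq 0\}$ with the IFS $f_1(x) = x/2$, $f_2(x) \equiv 1$. Here $X^{(1)} = \{0\}$, yet $f_2(0) = 1 \notin X^{(1)}$; and in your final diameter step the farthest point $x^\ast = 1$ lies only in $f_2(X)$, while $f_2(p) = 1 \neq p$, so the identity $d(x^\ast,p) = d(f_i(y), f_i(p))$ breaks down. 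More tellingly, your argument never uses that $\delta$ is a \emph{limit} ordinal except to say $\delta \geq \omega > 0$; if it were valid it would exclude IFS-attractors of every positive height, contradicting the convergent-sequence example just mentioned.

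The paper's proof uses the limit hypothesis in an essential way. It splits $\F$ into $\F_1 = \{f : \h(f(\K)) = \delta\}$ and $\F_0 = \F \setminus \F_1$, sets $\mu = \max\{\h(f(\K)) : f \in \F_0\} < \delta$, and---because $\delta$ is limit---chooses a level $\rho$ with $\mu < \rho < \delta$. For $f \in \F_1$ only the weaker fact $D \cap f(D) \neq \emptyset$ is established (not $f(D) \subset D$), and the contradiction comes not from a global radius argument but from locating a point $a \in \K^{(\rho)} \setminus D$ at maximal distance from $D$ and showing that no clopen neighbourhood of $a$ can be covered: its preimages under maps in $\F_1$ miss $\K^{(\rho)}$, while images under maps in $\F_0$ have height $\leq \mu < \rho$. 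If you want to salvage your approach, you must at minimum isolate the maps whose images have height $< \delta$ and argue that they cannot account for points of high rank---and that is exactly where the limit hypothesis must enter.
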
 

\begin{proof}
Due to Mazurkiewicz-Sierpi\'nski's Theorem, such a space is of the form $\K=\omega^\delta\cdot n+1$, where $\delta=\h(\K)$ is a limit ordinal. We assume that $\K$ has a fixed metric $d$ and $\F$ is an IFS on $\K$ consisting of weak contractions. Suppose that  $\K=\bigcup_{f\in\F}f(\K)$, so there exists a weak contraction $f\in\F$ such that $\h(f(\K))=\delta$.  
Consequently the set $\F_1=\{f\in\F;\h(f(\K))=\delta\}$ is nonempty. Let $\F_0=\F\setminus\F_1$ and $\mu=\max(\{0\}\cup\{\h(f(\K)):~f\in\F_0\})$.
Then $\mu < \delta$, because $\delta$ is a limit ordinal. We will consider the Cantor-Bendixson rank $\rk(x)$ of points with respect to the space $\K$. Denote by $D$ the set of points of rank $\delta$. Note that the set $D=\K^{(\delta)}$ is finite.

\begin{claim}\label{CLemLimit}
If $f\in\F_1$ then $D\cap f(D)\neq\emptyset$.
\end{claim}

\begin{proof}
Suppose $D\cap f(D)=\emptyset$, so for every $x\in D$ the rank of $f(x)$ is less than $\delta$. Choose a neighborhood $V_x$ of $f(x)$ such that $V_x\cap D = \emptyset$. Then $\h(V_x)<\delta$. Find a clopen neighborhood $W_x$ of $x$ such that $f(W_x)\subset V_x$. Then $\h(f(W_x))$ is also less than $\delta$. Define $W=\bigcup_{x\in D}W_x$. Then the set $f(\K)=f(W)\cup f(\K\setminus W)$ has height $<\delta$, which gives a contradiction.
\end{proof}

We now come back to the proof of Theorem \ref{TwLimit}.

If the set $D=\{x_0\}$ is a singleton, then  by Claim \ref{CLemLimit} we know that $f(x_0)=x_0$ for every $f\in\F_1$. Then let $\rho$ be such that $\delta>\rho>\mu$. In the case where the set $D$ consists of more than one element, there exists 
$$\e=\min\{d(x,y)\colon~x\neq y,~ x,y\in D\cup\bigcup_{f\in\F_1}f(D)\}>0.$$ Due to the fact that $D=\K^{(\delta)}=\bigcap_{\rho<\delta}\K^{(\rho)}$ there exists an ordinal $\rho$ such that $\mu<\rho<\delta$ and $\K^{(\rho)}\subset\bigcup_{x\in D}B(x,\frac\e{2})$. Denote this set by $A$ so $$A=\K^{(\rho)}=\{x\in\K\colon\rk(x)\geq\rho\}\subset\bigcup_{x\in D}B(x,\frac\e{2}).$$ 
It is clear that $A$ is closed in $\K$ and $A\setminus D$ is nonempty set because $\delta$ is a limit ordinal number. Define $$\alpha=\sup_{x\in A}d(x,D)>0.$$ The set $A$ is compact, so there exists an element $a\in A$ such that $d(a,D)=~\alpha$ and $\rho\leq~\rk(a)<\delta$. It means that for every open neighborhood $U$ of $a$ we have $\h(U)\geq \rk(a)\geq\rho$.

Note that for $f\in\F_1$ if $a\in f(\K)$, then distance between set $f^{-1}(a)$ and set $D$ is greater than $\alpha$. 
Indeed for each $x\in f^{-1}(a)$ there exist $x_0,a_0\in D$ such that $d(x,x_0)=d(x,D)$ and $d(a,a_0)=d(a,D)=\alpha$. We first do the case $f(x_0)\in D$. Then we have
$$d(x,x_0)>d(f(x),f(x_0))= d(a,f(x_0))\geq d(a,D) =\alpha.$$ 
In the case $f(x_0)\notin D$ the set $D$ has more than one element. Note that $\alpha\leq~\frac\e{2}$, because $A\subset\bigcup_{x\in D}B(x,\frac\e{2})$. Moreover $d(a_0,f(x_0))\geq\e$ by the definition of $\e$. Then by the weak contracting property of $f$ and by the triangle inequality we have
$$d(x,x_0)>d(a,f(x_0))\geq d(a_0,f(x_0))-d(a,a_0)\geq\e-\alpha\geq\frac\e{2}\geq\alpha.$$
Consequently $d(x,D)>\alpha$ for every $x\in f^{-1}(a)$.

Thanks of that we can find a clopen neighborhood $U$ of $a$, such that $f^{-1}(U)\cap A$ is empty for every $f\in\F_1$. It implies that $\h(f^{-1}(U))<\rho$.  
\newline

The space $\K$ is an attractor of IFS $\F$, so we have $U=\bigcup_{f\in\F}f(f^{-1}(U))$. For $f\in\F_0$ it holds $\h(f(f^{-1}(U)))\leq \h(f(\K))\leq\mu<\rho$. For $f\in\F_1$ we know that $\h(f(f^{-1}(U)))\leq \h(f^{-1}(U))<\rho$. We finally have a contradiction by applying the fact that
$$\h(U)=\max_{f\in\F}\{\h(f(f^{-1}(U)))\}<\rho.$$
This completes the proof.
\end{proof}

\section{Scattered spaces of successor height}

Recall that every countable scattered compact space is homeomorphic to an ordinal $\omega^{\beta}\cdot n + 1$, with the order topology.
We start with the case $n=1$.

\begin{theorem}\label{Tw2}
For every $\e>0$ and every countable ordinal $\delta$ the scattered space $\omega^{\delta+1}+1$ is homeomorphic to the~attractor of an iterated function system consisting of two contractions $\{\ph,\ph_{\delta+1}\}$ in the unit interval $I=[0,1]$, such that $$\max(\Lip(\ph),\Lip(\ph_{\delta+1}))<\e.$$
\end{theorem}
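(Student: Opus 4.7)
The plan is to prove the statement by transfinite induction on $\delta$. For the base case $\delta=0$, the set $\{0\}\cup\{r^n\colon n\geq 0\}\subset[0,1]$, for any $r\in(0,\e)$, is the attractor of the IFS $\{\ph(x)=rx,\ \ph_1(x)=1\}$; both maps are contractions with Lipschitz constants less than $\e$, and the attractor is homeomorphic to $\omega+1$.

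For the inductive step I would fix $\delta>0$, assume the theorem for every countable $\beta<\delta$, and choose small parameters $r,L\in(0,1)$ with $r<\e$ and $r+L<1$. The attractor will be placed in $[0,1]$ as $X=\{0\}\cup\bigsqcup_{n\geq 1}D_n$, where $D_n:=r^{n-1}D_1$ are pairwise disjoint scaled copies of a ``top block'' $D_1\subset[1-L,1]$. Then $\ph(x)=rx$ satisfies $\ph(X)=X\setminus D_1$ with $\Lip(\ph)=r<\e$, so the second map $\ph_{\delta+1}$ must have $\ph_{\delta+1}(X)=D_1$. The set $D_1$ is chosen so that $X\cong\omega^{\delta+1}+1$: when $\delta=\gamma+1$ is a successor, $D_1$ is a scaled copy of $\omega^{\gamma+1}+1$ obtained from the inductive hypothesis (with auxiliary Lipschitz bound much smaller than $\e$) and placed affinely in $[1-L,1]$; when $\delta$ is a limit ordinal, I pick a cofinal sequence of successor ordinals $\alpha_m+1\nearrow\delta$ and set $D_1=\{1\}\cup\bigsqcup_m G_m$, where each $G_m$ is a scaled copy of $\omega^{\alpha_m+1}+1$ (an IFS-attractor by the inductive hypothesis) placed near $1$ with rapidly decreasing diameter. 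In the limit case $D_1$ itself has limit height $\delta$ and, by Theorem~\ref{TwLimit}, is not an IFS-attractor, but only its embedding into $[0,1]$ is used. A rank computation---using that the tops of the $D_n$ have rank $\delta$ in $X$ and accumulate to $0$---together with the Mazurkiewicz-Sierpi\'nski theorem then yields $X\cong\omega^{\delta+1}+1$.

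The map $\ph_{\delta+1}$ is defined piecewise on $X$ and extended by McShane to all of $[0,1]$. Using the natural clopen decomposition $D_1=\{1\}\cup\bigsqcup_m E_m$ (coming from the self-similar structure of the successor case, or $E_m=G_m$ in the limit case), I would set $\ph_{\delta+1}(0)=1$, $\ph_{\delta+1}|_{D_1}\equiv 1$, and for each $n\geq 2$ let $\ph_{\delta+1}|_{D_n}$ be a continuous surjection onto $E_{n-1}$. Each such surjection is built by matching the hierarchical decompositions of $D_n$ and $E_{n-1}$ into sub-blocks: most sub-blocks of $D_n$ are collapsed to single points of $E_{n-1}$, while a distinguished sub-block is mapped onto a proper sub-block of $E_{n-1}$, iterating one level down.

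The hard part is ensuring $\Lip(\ph_{\delta+1})<\e$ uniformly. The strategy is to make target distances (inside $D_1$) arbitrarily small by tuning $L$ and the diameters of the $E_m$, while source distances are bounded below by geometric quantities in $r$. Since the inductive hypothesis lets auxiliary Lipschitz constants be chosen as small as we wish, one can force $\diam(E_m)<\e\cdot r^m\cdot L$ for every $m$, which keeps the ratios $|\ph_{\delta+1}(x)-\ph_{\delta+1}(y)|/|x-y|$ below $\e$ whether $x,y$ lie in the same block $D_n$, in different blocks, or include the limit point $0$. The most delicate bookkeeping arises in the limit case, where $D_1$ is a patchwork of copies of $\omega^{\alpha_m+1}+1$ of unbounded heights, and cross-block estimates have to be balanced simultaneously against the positions and sizes of the $G_m$ near $1$.
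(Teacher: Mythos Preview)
Your overall architecture matches the paper's: $\ph(x)=x/r$ shifts blocks down and $\ph_{\delta+1}$ is to carry the whole attractor onto the top block $D_1$. The genuine gap is in how you manufacture $\ph_{\delta+1}$, specifically at the limit step. Your inductive hypothesis only says that each $\omega^{\beta+1}+1$ (for $\beta<\delta$) is realizable as an IFS-attractor with small Lipschitz constants; it does not supply Lipschitz surjections between copies of $\omega^{\alpha+1}+1$ and $\omega^{\beta}+1$ for unrelated $\alpha,\beta$. But this is exactly what $\ph_{\delta+1}\restriction D_n$ must do: $D_n$ is a scaled copy of $D_1=\{1\}\cup\bigsqcup_m G_m$, so its sub-blocks have heights $\alpha_m+1$ ranging over a cofinal sequence in $\delta$, while the target $E_{n-1}=G_{n-1}$ has sub-blocks all of height $\alpha_{n-1}$. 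To surject a sub-block of height $\alpha_m+1$ onto one of height $\alpha_{n-1}$ you need a collapse map between pieces built at \emph{different} stages of your induction, and nothing in your hypothesis controls the Lipschitz constant of such a map. Shrinking $\diam(E_m)$ does not help: the difficulty is not the overall size of the target but its internal metric structure, which was frozen when you invoked the hypothesis for $\alpha_{n-1}$ and bears no relation to the internal structure of the scaled $G_m$ you are mapping from.

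The paper resolves this by abandoning the plain induction on $\delta$ and instead performing one unified transfinite construction. For a fixed $r>3$ it builds sets $L_\alpha,K_\alpha\subset[0,1]$ and maps $g_\alpha,f_\alpha$ for all $\alpha\leq\delta$ simultaneously, using a \emph{monotone ladder system} to organize the limit stages coherently. The key property, proved by induction on $\beta$, is $g_\alpha(L_\beta)=L_\alpha$ for \emph{every} pair $\alpha\leq\beta\leq\delta$, with $\Lip(g_\alpha)=r/(r-2)$ independent of $\alpha$. This two-parameter family of mutually compatible collapse surjections is precisely what your phrase ``iterating one level down'' would need, but your stated hypothesis cannot deliver it. In effect the inductive statement must be loaded: one has to carry along not just the attractors but the entire tower of collapse maps between them, and monotonicity of the ladder system is what keeps the Lipschitz bound uniform.
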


To prove this theorem we shall use the notion of a monotone ladder system.
We shall denote by $\LIM(\alpha)$ the set of all limit ordinals $\leq \alpha$.

\begin{definition}
Let $\alpha$ be an ordinal.
A \emph{monotone ladder system} in $\alpha$ is a collection of sequences $\{c_n^\alpha(\beta): n\in\N;~\beta \in \LIM(\alpha)\}$ such that 
\begin{itemize}
\item for each ordinal $\beta \in \LIM(\alpha)$ the sequence $\{c_n^\alpha(\beta)\}_{n\in\N}$ is strictly increasing and converges to $\beta$ when $n\rightarrow\infty$;
\item for every $\beta,\gamma\in \LIM(\alpha)$ if $\beta\leq\gamma$ then $c_n^\alpha(\beta)\leq c_n^\alpha(\gamma)$ for every $n\in\N$.
\end{itemize}
\end{definition}

%Let us note that it is not possible to have a monotone ladder system in the set of all countable ordinals.

We shall need monotone ladder systems for our construction. Their existence is rather standard, we give a proof for the sake of completeness.

\begin{lemma}\label{LemmaMLS}
For every countable ordinal $\alpha$ there exists a monotone ladder system in $\alpha$.
\end{lemma}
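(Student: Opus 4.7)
The statement is proved by transfinite induction on $\alpha$. The base case $\alpha=0$ is vacuous because $\LIM(\alpha)$ is empty, and the successor case $\alpha=\beta+1$ reduces to the previous step since $\LIM(\alpha)=\LIM(\beta)$, so any monotone ladder system on $\beta$ supplied by the inductive hypothesis works for $\alpha$ as well.

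Assume $\alpha$ is a countable limit ordinal. Fix a strictly increasing sequence $\alpha_n\nearrow\alpha$ with $\alpha_0=0$, declare $c_k^\alpha(\alpha):=\alpha_k$, and for each other $\beta\in\LIM(\alpha)$ let $n(\beta):=\min\{n:\beta\leq\alpha_n\}$. The plan is to define $c_k^\alpha(\beta):=\alpha_k$ for $k<n(\beta)$ and $c_k^\alpha(\beta):=c_k^{\mathcal{M}_{n(\beta)}}(\beta)$ for $k\geq n(\beta)$, where $(\mathcal{M}_n)_{n\in\N}$ is a \emph{coherent chain} of monotone ladder systems on the $\alpha_n$'s, meaning that $\mathcal{M}_{n+1}$ restricted to $\LIM(\alpha_n)$ equals $\mathcal{M}_n$, and moreover every value $c_k^{\mathcal{M}_n}(\beta)$ for $\beta\in\LIM(\alpha_n)\setminus\LIM(\alpha_{n-1})$ exceeds $\alpha_{n-1}$.

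The coherent chain is built by a secondary induction on $n$. Given $\mathcal{M}_n$, pick any monotone ladder system $\mathcal{N}$ on $\alpha_{n+1}$ (guaranteed by the main induction hypothesis, as $\alpha_{n+1}<\alpha$), keep $\mathcal{M}_n$'s values on $\LIM(\alpha_n)$, and for each new limit $\beta\in\LIM(\alpha_{n+1})\setminus\LIM(\alpha_n)$ set $c_k^{\mathcal{M}_{n+1}}(\beta):=\max\bigl(c_k^\mathcal{N}(\beta),\,\alpha_n+1+k\bigr)$. The floor $\alpha_n+1+k$ is $\beta$-independent, strictly increasing in $k$, and lies in $(\alpha_n,\beta)$ for any such $\beta$ because $\beta\geq\alpha_n+\omega$ whenever $\beta>\alpha_n$ is a limit; consequently the new ladders are strictly increasing, converge to $\beta$, exceed $\alpha_n$ (and hence dominate every old value, which lies below $\alpha_n$), and preserve cross-$\beta$ monotonicity inherited from $\mathcal{N}$.

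Verification of the ladder-system conditions for the final $c_k^\alpha$ is routine: strict increase and convergence in each regime are immediate, and the junction $k=n(\beta)$ works because the lift guarantees $c_{n(\beta)}^{\mathcal{M}_{n(\beta)}}(\beta)>\alpha_{n(\beta)-1}=c_{n(\beta)-1}^\alpha(\beta)$. Monotonicity with $\alpha$ is trivial for $k<n(\beta)$ and follows from $c_k^{\mathcal{M}_{n(\beta)}}(\beta)<\beta\leq\alpha_{n(\beta)}\leq\alpha_k$ for $k\geq n(\beta)$. For $\beta_1<\beta_2<\alpha$, the mixed regimes are easy, and in the shared tail $k\geq n(\beta_2)\geq n(\beta_1)$ coherence yields $c_k^{\mathcal{M}_{n(\beta_1)}}(\beta_1)=c_k^{\mathcal{M}_{n(\beta_2)}}(\beta_1)$, whence monotonicity inside $\mathcal{M}_{n(\beta_2)}$ closes the comparison. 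The principal obstacle is precisely the coherent extension step, where the trick of lifting by the $\beta$-independent floor $\alpha_n+1+k$ is what reconciles the independently chosen systems $\mathcal{N}$ into a nested family without disturbing the monotonicity in $\beta$.
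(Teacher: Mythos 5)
Your proof is correct, and while it shares the paper's skeleton---transfinite induction, with the limit case handled by fixing a cofinal sequence $0=\alpha_0<\alpha_1<\alpha_2<\dots$ and splicing ladder systems on the $\alpha_n$ into one on $\alpha$---the splicing device is genuinely different. The paper treats $\alpha=\alpha'+\omega$ and limits of limit ordinals separately, takes \emph{independently chosen} systems on the $\alpha_{n_0+1}$, and forces compatibility after the fact by clipping, i.e.\ passing to $\min\bigl(\alpha_k,\max(\alpha_{n_0},c_k^{\alpha_{n_0+1}}(\beta))\bigr)$. You instead handle all limits uniformly and first manufacture a \emph{coherent} (nested) chain $\mathcal{M}_0,\mathcal{M}_1,\dots$ of systems on the $\alpha_n$, lifting each newly added ladder by the $\beta$-independent, strictly increasing floor $\alpha_n+1+k\in(\alpha_n,\beta)$, and only then read off $c_k^\alpha(\beta)$ as $\alpha_k$ below the junction $n(\beta)$ and as the stabilized value from the chain above it. The cost is a secondary induction; the payoff is that every verification is local and strictness is automatic, since the maximum of two strictly increasing ordinal sequences is strictly increasing and each new ladder dominates all previously defined values. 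Your floor in fact repairs a small blemish in the paper's version, where $\max(\alpha_{n_0},c_k^{\alpha_{n_0+1}}(\beta))$ can sit at the constant value $\alpha_{n_0}$ for several consecutive indices, momentarily breaking strict increase. The steps you label routine (the junction inequality via the coherence floor, monotonicity against $\alpha$ itself, and the three regimes for $\beta_1<\beta_2$) all check out.
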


\begin{proof}
We prove that lemma by transfinite induction on limit ordinals $\leq \alpha$.
Setting $c_n^\omega(\omega) = n$, we obtain a monotone ladder system in $\omega$.

Now suppose that $\alpha$ is a limit ordinal and for all limit ordinals $\alpha'<\alpha$ there exists a monotone ladder system $\{c_n^{\alpha'}(\beta): n\in\N;~\beta \in \LIM(\alpha')\}$ in~$\alpha'$. We have to construct such a system in $\alpha$. 

If $\alpha=\alpha'+\omega$ then
$$c_n^\alpha(\beta)=c_n^{\alpha'}(\beta) \text{ for every } \beta \in \LIM(\alpha')$$
and
$$c_n^\alpha(\alpha)=\alpha'+n.$$
It is obvious that this is a monotone ladder system in $\alpha$.

Now suppose that $\alpha$ is a limit ordinal among limit ordinals and choose a strictly increasing sequence $\{\alpha_n\}_{n\in\N}$ such that $\alpha_0 = 0$, $\alpha_n \in \LIM(\alpha)$ for $n > 0$ and $\alpha = \sup_{n \in \N}\alpha_n$.

Given a limit ordinal $\beta<\alpha$ there exists a natural number $n_0$ such that $\alpha_{n_0}<\beta\leq\alpha_{n_0+1}$. Let
$$\bar{c}_n(\beta)=\max(\alpha_{n_0},c_n^{\alpha_{n_0+1}}(\beta)).$$
Note that $\{\bar{c}_n(\beta):n\in\N;~ \beta \in \LIM(\alpha),\; \beta < \alpha\}$ is a monotone ladder system: for any limit ordinals $\beta\leq\gamma<\alpha$ there exist $n_0,m_0\in\N$ such that $\alpha_{n_0}<\beta\leq\alpha_{n_0+1}$ and $\alpha_{m_0}<\gamma\leq\alpha_{m_0+1}$. If $n_0<m_0$ then for all $n\in\N$
$$\bar{c}_n(\beta)\leq\alpha_{n_0+1}\leq\alpha_{m_0}\leq\bar{c}_n(\gamma).$$
If $n_0=m_0$ then by the inductive hypothesis for $\alpha_{n_0+1}$ we have
$$\bar{c}_n(\beta)=\max(\alpha_{n_0},c_n^{\alpha_{n_0+1}}(\beta))\leq\max(\alpha_{m_0},c_n^{\alpha_{m_0+1}}(\gamma))=\bar{c}_n(\gamma).$$
Now we construct a monotone ladder system in $\alpha$ as follows. For every $\beta<\alpha$ and $n\in\N$ define
$$c_n^\alpha(\beta)=\min(\alpha_n,\bar{c}_n(\beta)) \text{ and } c_n^\alpha(\alpha)=\alpha_n.$$ 
Note that for every limit ordinals $\beta\leq\gamma\leq\alpha$, if $\gamma<\alpha$ then $c_n^\alpha(\beta)\leq c_n^\alpha(\gamma)$ because  $\{\bar{c}_n(\beta):n\in\N;~ \beta \in \LIM(\alpha),\; \beta<\alpha\}$ was monotone. If $\gamma=\alpha$ then $c_n^\alpha(\beta)=\min(\alpha_n,\bar{c}_n(\beta))\leq \alpha_n = c_n^\alpha(\alpha)$.
This means that $\{c_n^{\alpha}(\beta): n\in\N;~\beta \in \LIM(\alpha)\}$ is indeed a monotone ladder system in $\alpha$.

Note that for any limit ordinal $\alpha$, its monotone ladder system is also a monotone ladder system in every successor ordinal $\beta$, such that $\alpha<\beta<\alpha+\omega$. 
\end{proof}

\begin{proof}[Proof of Theorem \ref{Tw2}]
Fix a countable ordinal $\delta$. We have to construct an IFS-attractor homeomorphic to the space $\omega^{\delta+1}+1$. By Lemma~\ref{LemmaMLS} there exists a monotone ladder system for $\delta'=\delta+\omega$. Then for every ordinals $\alpha\leq\delta$ define a sequence $\{\alpha_n\}_{n\in\N}$ such that 
\begin{itemize}
\item if $\alpha$ is a limit ordinal, we put $\alpha_n:=c_n^{\delta'}(\alpha)$
\item if $\alpha$ is a successor ordinal, we put $\alpha_n:=c_n^{\delta'}(\alpha+\omega)$
\end{itemize}
Note that for every $\alpha,\beta\leq\delta$ if $\alpha\leq\beta$ then $\alpha_n\leq\beta_n$ for all $n\in\N$.
\newline

Let $r>3$. For a natural number $n$ consider the affine homeomorphism
$$s_n(x)=\frac{x}{r^n}+\frac{1}{r^n}.$$
Now for every ordinal $\alpha\leq\delta+1$ we construct scattered compact sets $L_\alpha, K_\alpha\subset I$, homeomorphic to $\omega^\alpha+1$, as follows:
\begin{enumerate}[1.]
\item $L_0= \{0\}$ 
\item $L_{\alpha}=L_0\cup\bigcup_{\alpha_n<\alpha}s_n(L_{\alpha_n}) \cup\bigcup_{\alpha_n\geq\alpha}s_n(L_{\alpha'})$ for $\alpha=\alpha'+1$ successor ordinal
\item $L_\alpha = L_0\cup\bigcup_{n=1}^\infty s_n(L_{\alpha_n})$ for a limit ordinal $\alpha$.
\end{enumerate}
Now define
\begin{enumerate}[(a)]
\item $K_0=L_0$ 
\item $K_{\alpha+1}= K_0\cup\bigcup_{n=1}^\infty s_n(K_\alpha)$
\item $K_\alpha = L_\alpha$ for a limit ordinal $\alpha$.
\end{enumerate}
Each of these spaces consists of blocks contained in $s_n(I)$, each block is a space of a lower height and they accumulate to $0$. 

\begin{figure}[h]
	\includegraphics[scale =0.7]{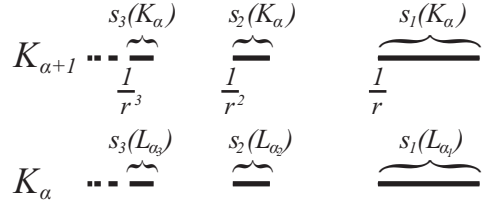}
\caption{The spaces $K_\alpha$}
\end{figure}

\begin{figure}[h]
	\includegraphics[scale =0.7]{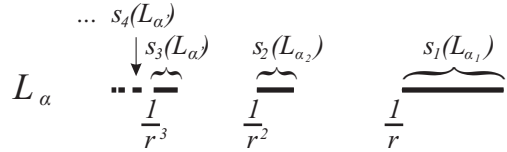}
	\caption{An example of $L_\alpha$ where $\alpha=\alpha'+1$ and $\alpha_2<\alpha\leq\alpha_3$}
\end{figure}

Now we make the following definition of an iterated function system $\{\ph,\ph_{\delta+1}\}$ such that $\ph(K_{\delta+1})\cup\ph_{\delta+1}(K_{\delta+1})=K_{\delta+1}$.
We use the contraction $$\ph(x)=\frac{x}r$$ that shifts every block contained in $s_n(I)$ onto the next block, contained in $s_{n+1}(I)$. In particular
$$\ph(K_{\delta+1})=  K_{\delta+1}\setminus s_1(K_{{\delta}}).$$
Now we define $\ph_{\delta+1}=s_1\circ f_\delta$ where $f_\delta$ is defined below, with the use of additional functions $g_\alpha$. Namely, for every $\alpha\leq\delta$ we define
\begin{enumerate}
\item $g_0 = \begin{cases}
0, & x\in[0,\frac{2}r]  \\
\frac{r}{r-2}(x-\frac2{r}), & x\in(\frac{2}r,1]
\end{cases}$ 
\item $g_{\alpha}(x)=
\begin{cases}
s_n(g_{\alpha_n}(s_n^{-1}(x))), & x\in s_n(I) \text{ and } \alpha_n<\alpha,\; n\geq 1 \\
s_n(g_{\alpha'}(s_n^{-1}(x))), & x\in s_n(I) \text{ and } \alpha_n\geq\alpha,\; n\geq 1 \\
x, & \text{otherwise}
\end{cases}$ 
\newline whenever $\alpha=\alpha'+1$ is a successor ordinal
\item $g_\alpha=f_\alpha$ for $\alpha$ a limit ordinal.
\end{enumerate}
Finally, define
\begin{enumerate}[1.]
\item $f_0=g_0$

\item  $f_{\alpha+1}(x)=
\begin{cases}
s_n(f_\alpha(s_n^{-1}(x))), & x\in s_n(I), \text{ for some }n\geq 1 \\
x, & \text{otherwise}
\end{cases}$

\item  $f_{\alpha}(x)=
\begin{cases}
s_n(g_{\alpha_n}(s_n^{-1}(x))), & x\in s_n(I), \text{ for some }n\geq 1 \\
x, & \text{otherwise}
\end{cases}$\\
for a limit ordinal $\alpha$.
\end{enumerate} 

\begin{figure}[h]
	\includegraphics[scale =0.45]{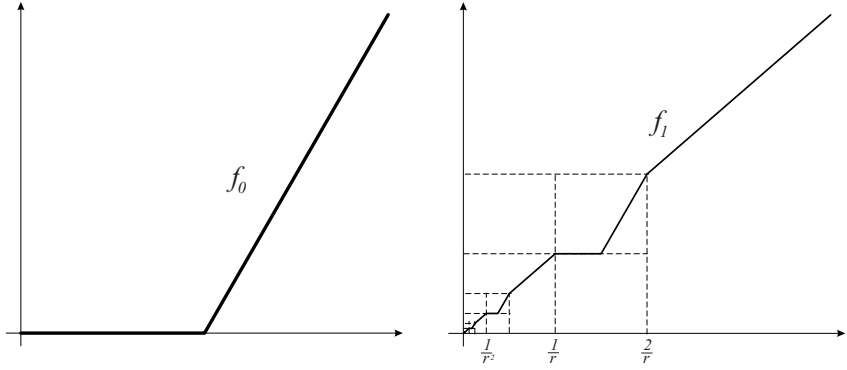}
	\caption{The functions $f_0$ and $f_1$ for $r=4$}
\end{figure}

Note that every function $f_\alpha$ and $g_\alpha$ is continuous and $\Lip(f_\alpha)=\Lip(g_\alpha)=\frac{r}{r-2}$, so $$\Lip(\ph_{\delta+1})=\Lip(s_1) \cdot \Lip(f_{\delta})=\frac{1}{r-2}<1.$$
Moreover $\max(\Lip(\ph),\Lip(\ph_{\delta+1}))=\frac{1}{r-2}$ thus for every $\e > 0$ we can find $r>3$, such that $\frac{1}{r-2}<\e$.

Now we show that for every ordinals $\alpha,\beta\leq\delta$ the following properties hold
\begin{enumerate}[(A)]
\item \label{K1} $g_\alpha(L_\beta)=L_\alpha$ when $\alpha\leq\beta$
\item \label{K3} $f_\alpha(K_{\alpha+1})=K_\alpha$. 
\end{enumerate} 

\begin{proof}[Proof of property (\ref{K1})]
The proof is by transfinite induction on $\beta$. For $\beta=0$ it is true that $g_0(L_0)=L_0$. 

In the second step we assume that for every $\beta'<\beta$  and each $\alpha'\leq\beta'$ it holds that $g_{\alpha'}(L_{\beta'})=L_{\alpha'}$. Let us consider four cases where $\alpha\leq\beta$. Note that in each case $\alpha_n\leq\beta_n$ for all $n\in\N$.
\newline
\textbf{Case 1.}
$\alpha$ and $\beta$ are limit ordinals (in particular $\alpha_n\nearrow\alpha$ and $\beta_n\nearrow\beta$).
Then by the inductive hypothesis
$$
g_\alpha(L_\beta) =
L_0\cup\bigcup_{n=1}^\infty s_n(g_{\alpha_n}(L_{\beta_n}))=
L_0\cup\bigcup_{n=1}^\infty s_n(L_{\alpha_n})=L_\alpha.
$$
\newline
\textbf{Case 2.}
$\alpha=\alpha'+1$ and $\beta$ is a limit ordinal. Then $\beta_n\nearrow\beta$ and again using the inductive hypothesis, we get
\begin{align*}
g_\alpha(L_\beta) 
&= L_0\cup\bigcup_{\alpha_n<\alpha}s_n(g_{\alpha_n}(L_{\beta_n})) \cup\bigcup_{\alpha_n\geq\alpha}s_n(g_{\alpha'}(L_{\beta_n}))=\\
&= L_0\cup\bigcup_{\alpha_n<\alpha}s_n(L_{\alpha_n}) \cup\bigcup_{\alpha_n\geq\alpha}s_n(L_{\alpha'})=L_\alpha.
\end{align*}
\newline
\textbf{Case 3.}
$\alpha$ is a limit ordinal and $\beta=\beta'+1$. Then $\alpha_n\nearrow\alpha$ and every $\alpha_n<\beta'$. Thus
\begin{align*}
g_\alpha(L_\beta) 
&= L_0\cup\bigcup_{\beta_n<\beta}s_n(g_{\alpha_n}(L_{\beta_n})) \cup\bigcup_{\beta_n\geq\beta}s_n(g_{\alpha_n}(L_{\beta'}))=\\
&= L_0\cup\bigcup_{\beta_n<\beta}s_n(L_{\alpha_n}) \cup\bigcup_{\beta_n\geq\beta}s_n(L_{\alpha_n})=\\
&=L_0\cup\bigcup_{n=1}^\infty s_n(L_{\alpha_n})=L_\alpha.
\end{align*}
\newline
\textbf{Case 4.}
$\alpha=\alpha'+1$ and $\beta=\beta'+1$. Then
\begin{align*}
g_\alpha(L_\beta)
&= g_\alpha(L_0\cup\bigcup_{\beta_n<\beta}s_n(L_{\beta_n}) \cup\bigcup_{\beta_n\geq\beta}s_n(L_{\beta'})) =\\
&=L_0\cup\bigcup_{\alpha_n<\alpha,\beta_n<\beta}s_n(g_{\alpha_n}(L_{\beta_n})) 
\cup\bigcup_{\alpha\leq\alpha_n,\beta_n<\beta}s_n(g_{\alpha'}(L_{\beta_n})) \cup\\
&\cup\bigcup_{\alpha_n<\alpha,\beta\leq\beta_n}s_n(g_{\alpha_n}(L_{\beta'})) \cup\bigcup_{\alpha\leq\alpha_n,\beta\leq\beta_n}s_n(g_{\alpha'}(L_{\beta'})).
\end{align*}
For each of the unions above we can use the inductive hypothesis and we get
$$g_\alpha(L_\beta) 
=L_0\cup\bigcup_{\alpha_n<\alpha}s_n(L_{\alpha_n}) \cup\bigcup_{\alpha_n\geq\alpha}s_n(L_{\alpha'}) = L_\alpha,$$
which completes the proof of property (\ref{K1}).
\end{proof}

\begin{proof}[Proof of property (\ref{K3})] Once again we use transfinite induction. For $\alpha=0$ it is obvious that $f_0(K_1)=K_0$, because $K_1\subset[0,\frac{2}r]=f_0^{-1}(K_0)$. Therefore, if $\alpha=\alpha'+1$, then by the inductive hypothesis
$$f_\alpha(K_{\alpha+1})=K_0\cup\bigcup_{n=1}^\infty s_n(f_{\alpha'}(K_{\alpha'+1})) = K_0\cup\bigcup_{n=1}^\infty s_n(K_{\alpha'})=K_\alpha.$$
If $\alpha$ is a limit ordinal then, using property (\ref{K1}), we get
\begin{align*}
f_\alpha(K_{\alpha+1}) &= K_0\cup\bigcup_{n=1}^\infty s_n(g_{\alpha_n}(K_{\alpha})) = K_0\cup\bigcup_{n=1}^\infty s_n(g_{\alpha_n}(L_{\alpha})) =\\
&= K_0\cup\bigcup_{n=1}^\infty s_n(L_{\alpha_n})=K_\alpha,
\end{align*}
which completes the proof of property (\ref{K3}).
\end{proof}

Finally, we show that the scattered space $K_{\delta+1}$ is the~attractor of $\{\ph,\ph_{\delta+1}\}$.
Indeed, using property (\ref{K3}) we obtain that
\begin{align*} 
\ph(K_{\delta+1})\cup\ph_{\delta+1}(K_{\delta+1}) &=\big(K_{\delta+1}\setminus s_1(K_\delta)\big)\cup s_1(f_\delta(K_{\delta+1}))=\\
&=\big(K_{\delta+1}\setminus s_1(K_{\delta})\big)\cup s_1(K_{\delta})=K_{\delta+1}.
\end{align*}
This finishes the proof of Theorem \ref{Tw2}.
\end{proof}

The space $\omega^\alpha\cdot n +1$ can be represented as the union of $n$ disjoint copies of $\omega^\alpha +1$.
In view of Lemma~\ref{LemYUnion}, such a space is an IFS-attractor whenever it is properly embedded into the real line (or some other metric space).

Summarizing:

\begin{corollary}
A countable compact space $X$ is a topological IFS-attractor if and only if its Cantor-Bendixson height is a successor ordinal.
If this is the case, then $X$ is homeomorphic to an IFS-attractor in the real line.
\end{corollary}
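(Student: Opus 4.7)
The plan is simply to assemble the three main results of the paper. First, I would observe that every countable compact metrizable space is scattered (a compact metric space with a nonempty perfect subset contains a Cantor set and is therefore uncountable), so the Cantor--Bendixson height $\h(X)$ is a well-defined ordinal. By the Mazurkiewicz--Sierpi\'nski theorem, $X$ is homeomorphic to $\omega^{\beta}\cdot n + 1$ with $\beta=\h(X)$ countable and $n\geq 1$ finite.

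For the ``only if'' direction, if $\beta$ is a limit ordinal, then Theorem~\ref{TwLimit} directly says that $X$ is not homeomorphic to any IFS-attractor (even one consisting of weak contractions), so in particular $X$ is not a topological IFS-attractor.

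For the ``if'' direction, suppose $\beta=\delta+1$ is a successor. Theorem~\ref{Tw2} provides an IFS-attractor $Y\subset[0,1]$ homeomorphic to $\omega^{\delta+1}+1$. I would then place $n$ translated copies $Y_1,\dots,Y_n$ of $Y$ in $\R$, for instance $Y_k = Y + 3k$, so that they are pairwise disjoint compact sets. Each $Y_k$ is again an IFS-attractor (apply the translation to the contractions defining $Y$), and by compactness together with disjointness each $Y_k$ is clopen in $Y_1\cup\dots\cup Y_n$. A straightforward induction on $n$ using Lemma~\ref{LemYUnion} then shows that $Y_1\cup\cdots\cup Y_n$ is itself an IFS-attractor in $\R$; since this union is homeomorphic to $\omega^{\delta+1}\cdot n+1\cong X$, we obtain the required realization of $X$ as an IFS-attractor in the real line.

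There is essentially no obstacle beyond combining the ingredients; the only point requiring any care is verifying that the translated copies $Y_k$ are clopen in their union so that Lemma~\ref{LemYUnion} applies iteratively, and this is immediate from compactness and the strict positivity of pairwise distances between the $Y_k$.
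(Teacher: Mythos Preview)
Your proposal is correct and follows essentially the same approach as the paper: the corollary is not given a separate proof there but is simply stated as a summary of Theorem~\ref{TwLimit}, Theorem~\ref{Tw2}, and Lemma~\ref{LemYUnion}, which is exactly how you assemble it. Your version merely supplies the explicit placement $Y_k = Y + 3k$ of the $n$ copies and spells out the inductive use of Lemma~\ref{LemYUnion}, details the paper leaves implicit.
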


As we have already mentioned, taking the space $\omega^{\omega+1} + 1$, we obtain an example of a countable IFS-attractor with a clopen set (homeomorphic to $\omega^\omega+1$) that is not a topological IFS-attractor.

\subsection*{Acknowledgements}

The author would like to thank Wies{\l}aw Kubi\'s and Taras Banakh for several fruitful discussions.


\begin{thebibliography}{}
\bibitem{BanakhNowak} T.Banakh, M.Nowak, {\em A 1-dimensional Peano continuum which is not an IFS attractor}, to appear in Proc. Amer. Math. Soc. {\bf 141} (2013), 931-935.

\bibitem{B} M.Barnsley, {\em Fractals everywhere}, Academic Press, Boston, 1988.

\bibitem{Rams} S.Crovisier, M.Rams, {\em IFS attractors and Cantor sets}, Topology and its Applications, {\bf 153} (2006), 1849--1859.

\bibitem{Hata} M.~Hata, {\em On the structure of self-similar sets}, Japan J. Appl. Math. {\bf 2}:2 (1985), 381--414.

\bibitem{KulczyckiNowak} M.Kulczycki, M.Nowak, {\em A class of continua that are not attractors of any IFS}, Cent. Eur. J. Math. {\bf 10}(6) (2012), 2073-2076

\bibitem{kwiecinski} M.~Kwieci\'nski, {\em A locally connected continuum which is not an IFS attractor}, Bull. Polish Acad. Sci. Math. {\bf 47}:2 (1999), 127--132.

\bibitem{MS} S. Mazurkiewicz, W. Sierpi\'nski. {\em Contribution a la topologie des ensembles d\'enombrables}, Fund. Math. {\bf 1} (1920), 17--27.

\bibitem{Sanders} Sanders M. J., {\em Non-Attractors of Iterated Function Systems}, Texas Project NexT e-Journal,{\bf 1} (2003), 1--9

\end{thebibliography}
\end{document}